\newtheorem{theorem}{Theorem}
\renewcommand{\d}{{\mathrm d}}
\renewcommand{\pmod}[1]{\;(\operatorname{mod}#1)}
\begin{document}

\title[Hypergeometric integrals for zeta values]{Some hypergeometric integrals\\for linear forms in zeta values}

\author{Wadim Zudilin}
\address{Department of Mathematics, IMAPP, Radboud University, PO Box 9010, 6500~GL Nijmegen, Netherlands}
\email{w.zudilin@math.ru.nl}

\dedicatory{To Carlo Viola, whose creativity in and love of integrals for linear forms in zeta values\\are boundless, on the occasion of his 75th birthday}

\subjclass[2010]{11J72, 11M06, 33C20}

\date{11 April 2018}

\maketitle

In the exposition below, $s$ and $D$ are positive integers such that $s\ge3D-1$,
while the parameter $n$ is assumed to be a positive \emph{even} integer. The notation
$$
\zeta(s,\alpha) = \sum_{n=0}^\infty \frac1{(n+\alpha)^s}
$$
is used for the Hurwitz zeta function, so that $\zeta(s)=\zeta(s,1)$, and
$d_n = \operatorname{lcm}(1,2,\dots,n)$.

In \cite{FSZ18} the following approximations are constructed: for any $j\in\{1,\dots,D\}$, take
$$
r_{n,j} = \sum_{m=1}^\infty R_n\bigg(m+\frac{j}{D}\bigg),
\qquad\text{where}\quad
R_n(t) = D^{3Dn} n!^{s+1-3D} \, \frac{ \prod_{l=0}^{3Dn} (t-n+l/D)}{ \prod_{l=0}^n (t+l)^{s+1}}.
$$
It is shown that%
\footnote{Choosing $n$ even implies $3Dn+1+(s+1)(n+1)\equiv s\pmod2$, hence $R_n(-n-t)=(-1)^sR_n(t)$.
This reflects on the parity in summation in \eqref{forms}\,---\,consideration in \cite{FSZ18} is restricted to the case of $s$ odd.}
\begin{equation}
\label{forms}
r_{n,j} 
=a_{0,j}+\sum_{\substack{2\le i \le s\\i\equiv s\pmod2}}a_i\zeta\bigg(i,\frac{j}{D}\bigg),
\end{equation}
with
\begin{equation*}
\begin{gathered}
d_n^{s+1-i}a_i\in\mathbb Z \qquad\text{for}\quad i = 2,3,4,\dots,s, \quad i\equiv s\pmod2,
\\
d_{n+1}^{s+1}a_{0,j}\in\mathbb Z \qquad\text{for}\quad j\in \{1,\dots,D\}
\end{gathered}
\end{equation*}
(see \cite[Lemmas 1 and 2]{FSZ18}), and some further information is provided for the asymptotic growth of \emph{positive} quantities $r_{n,j}$ as $n\to\infty$. The approximations are building blocks for linear forms in zeta values $\zeta(i)$ with $i$ of the same parity as $s$, with the help of elementary formula
$$
\sum_{j=1}^d\zeta\biggl(i,\frac{j\,(D/d)}{D}\biggr)=\sum_{j=1}^d\zeta\biggl(i,\frac jd\biggr)=d^i\zeta(i)
$$
valid for any divisor $d$ of $D$.

The principal goal of this note is to establish the following integral representation of the approximations $r_{n,j}$ for $j\in\{1,\dots,D\}$.

\begin{theorem}
\label{th1}
The linear forms \eqref{forms} admit the integral representation
$$
r_{n,j}=\frac{D^{s-1}(3Dn+1)!}{n!^{3D}}\sum_{m=1}^D\xi^{-mj}r_{n,m}^*,
$$
where
$$
r_{n,m}^*
=\xi^m\idotsint\limits_{[0,1]^{s+1}}
\frac{\prod_{i=0}^sx_i^{Dn}(1-x_i^D)^n\,\d x_i}{(1-\xi^mx_0\dotsb x_s)^{3Dn+2}}
=\int_0^{\xi^m}\!\!\idotsint\limits_{[0,1]^s}
\frac{\prod_{i=0}^sx_i^{Dn}(1-x_i^D)^n\,\d x_i}{(1-x_0\dotsb x_s)^{3Dn+2}}
$$
and $\xi=\xi_D$ denotes a primitive root of unity of degree $D$.
\end{theorem}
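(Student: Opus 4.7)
The plan is to evaluate the integral side of the asserted identity by power-series expansion of the kernel followed by termwise integration, apply a roots-of-unity filter, and match the resulting series against the hypergeometric series defining $r_{n,j}$. The equivalence of the two integral formulas for $r_{n,m}^*$ given in the statement follows from the substitution $x_0=\xi^m y_0$ with $y_0\in[0,1]$: the quantities $x_0^{Dn}$ and $(1-x_0^D)^n$ are unaffected by multiplying $x_0$ by $\xi^m$ (since $\xi^{mD}=1$), so the contour integral $\int_0^{\xi^m}$ converts into a real integral over $[0,1]$, with $\d x_0=\xi^m\,\d y_0$ contributing the prefactor $\xi^m$.

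Next I would expand
$$
\frac1{(1-\xi^m x_0\dotsb x_s)^{3Dn+2}}=\sum_{k=0}^\infty\binom{k+3Dn+1}{3Dn+1}(\xi^m x_0\dotsb x_s)^k
$$
and integrate term by term. Each one-dimensional factor $\int_0^1 x^{Dn+k}(1-x^D)^n\,\d x$ reduces under $u=x^D$ to a Beta integral, yielding $n!\,D^n\big/\prod_{l=0}^n(Dn+Dl+k+1)$. Thus $r_{n,m}^*$ is exhibited as a single series in $k\ge0$ whose summands are weighted by $\xi^{m(k+1)}$. Forming $\sum_{m=1}^D\xi^{-mj}r_{n,m}^*$ and applying the orthogonality
$$
\sum_{m=1}^D\xi^{m(k+1-j)}=\begin{cases}D,&k+1\equiv j\pmod D,\\0,&\text{otherwise,}\end{cases}
$$
isolates the terms with $k=j-1+Dm'$, so the double sum collapses to a single series over $m'\ge0$ containing explicit factorials together with the factor $\prod_{l=0}^n(n+l+m'+j/D)^{s+1}$ in the denominator.

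On the right-hand side I would rewrite $R_n(m+j/D)$ by turning its $3Dn+1$ consecutive numerator factors into the quotient $D^{-(3Dn+1)}(Dm+j+2Dn)!/(Dm+j-Dn-1)!$, valid for $m\ge n$; the terms with $1\le m\le n-1$ vanish because the numerator of $R_n$ then acquires an integer zero. Substituting $m=m'+n$ makes each summand coincide with the summand produced by the integral computation. Finally, one checks that the overall prefactors\,---\,$D^{s-1}(3Dn+1)!/n!^{3D}$ in the theorem together with $D^{3Dn}n!^{s+1-3D}$ built into $R_n$\,---\,combine consistently. The main obstacle is nothing more than the careful bookkeeping of the various $D$-powers and factorial ratios; no further conceptual idea is required.
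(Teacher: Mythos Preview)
Your argument is correct and coincides with the paper's in substance, though you run the computation in the opposite direction. The paper starts from the series $r_{n,j}$, recognises it as a ${}_{s+D+1}F_{s+D}$ at~$1$, applies the Euler integral representation (which is nothing but $s+1$ iterated Beta integrals) to obtain an $(s+1)$-fold integral with a ${}_DF_{D-1}$ kernel $f_j(t_0\cdots t_s)$, and then uses the roots-of-unity identity to split that kernel into the pieces $(1-\xi^m x)^{-(3Dn+2)}$; the change $t_i=x_i^D$ finishes. You instead begin from the integrals $r_{n,m}^*$, expand the binomial kernel, evaluate termwise via the same Beta integrals, and apply the discrete Fourier orthogonality to isolate the residue class $k\equiv j-1\pmod D$, matching the result against the series for $r_{n,j}$ after the shift $m\mapsto m'+n$. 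The ingredients\,---\,binomial expansion, Beta integral, roots-of-unity filter, and the vanishing of $R_n(m+j/D)$ for $1\le m\le n-1$\,---\,are identical; your route is slightly more elementary in that it never names the hypergeometric functions explicitly.
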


\begin{proof}
As the rational function $R_n(t)$ has zeros at $t=m-(D-j)/D$ for $m=1,\dots,n$ and $j\in\{1,\dots,D\}$,
we can write
\begin{align}
r_{n,j}
&= \sum_{m=n}^\infty R_n\bigg(m+\frac{j}{D}\bigg)
= D^{3Dn} n!^{s+1-3D}\sum_{k=0}^\infty
\frac{ \prod_{l=0}^{3Dn} (k+(l+j)/D)}{ \prod_{l=0}^n (k+n+l+j/D)^{s+1}}
\nonumber \displaybreak[2]\\
&=\frac{n!^{s+1-3D}\prod_{l=0}^{3Dn} (l+j)}{D\prod_{l=0}^n (n+l+j/D)^{s+1}}
\nonumber \\ &\quad\times
{}_{s+D+1}F_{s+D}\biggl(\begin{matrix} \{3n+\frac{j+l}D:l=1,\dots,D\}, \, \{n+\frac jD\}^{s+1} \\[2.5pt]
\{1+\frac{j-l}D:l=1,\dots,D,\,j\ne l\}, \, \{2n+1+\frac jD\}^{s+1} \end{matrix}\biggm|1\biggr)
\nonumber \displaybreak[2]\\
&=\frac{(3Dn+j)!}{D\,n!^{3D}(j-1)!}
\idotsint\limits_{[0,1]^{s+1}}f_j(t_0\dotsb t_s)
\prod_{i=0}^st_i^{n+j/D-1}(1-t_i)^n\,\d t_i,
\label{forms2}
\end{align}
where
\begin{align*}
f_j(t)
&={}_DF_{D-1}\biggl(\begin{matrix} \{3n+\frac{j+l}D:l=1,\dots,D\} \\
\{1+\frac{j-l}D:l=1,\dots,D,\,j\ne l\} \end{matrix} \biggm| t \biggr)
\\
&=\sum_{k=0}^\infty\frac{\prod_{l=1}^D(3n+\frac{j+l}D)_k}{\prod_{l=1}^D(1+\frac{j-l}D)_k}\,t^k
=\sum_{k=0}^\infty\frac{(3Dn+j+1)_{Dk}}{(j)_{Dk}}\,t^k
\qquad\text{for}\quad j\in\{1,\dots,D\}.
\end{align*}
Using
$$
\sum_{l=0}^\infty\frac{(a)_l}{l!}\,x^l=\frac1{(1-x)^a}
$$
observe that
\begin{align*}
\frac{(3Dn+2)_{j-1}}{(j-1)!}\,x^{j-1}f_j(x^D)
&=\sum_{k=0}^\infty\frac{(3Dn+2)_{Dk+j-1}}{(Dk+j-1)!}\,x^{Dk+j-1}
\\
&=\sum_{\substack{l=0\\l\equiv j-1\pmod D}}^\infty\frac{(3Dn+2)_l}{l!}\,x^l
=\frac1D\sum_{m=1}^D\frac{\xi^{-m(j-1)}}{(1-\xi^mx)^{3Dn+2}}.
\end{align*}
Taking $t_i=x_i^D$ for $i=0,1,\dots,s$ in the integrals \eqref{forms2} we thus obtain
\begin{align*}
r_{n,j}
&=\frac{D^{s-1}(3Dn+1)!}{n!^{3D}}
\sum_{m=1}^D\xi^{-m(j-1)}\idotsint\limits_{[0,1]^{s+1}}
\frac{\prod_{i=0}^sx_i^{Dn}(1-x_i^D)^n\,\d x_i}{(1-\xi^mx_0\dotsb x_s)^{3Dn+2}}
\end{align*}
for each $j\in\{1,\dots,D\}$.
\end{proof}

Taking $D=2$ and $s\ge5$ odd, we obtain the linear forms
\begin{align*}
7r_{n,2}-r_{n,1}
&=\frac{2^s(6n+1)!}{n!^6}\idotsint\limits_{[0,1]^{s+1}}
\biggl(\frac3{(1-x_0x_1\dotsb x_s)^{6n+2}}
\\[-10.5pt] &\qquad\qquad\qquad\qquad\qquad\qquad
-\frac4{(1+x_0x_1\dotsb x_s)^{6n+2}}\biggr)
\prod_{i=0}^sx_i^{2n}(1-x_i^2)^n\,\d x_i
\\
&=\frac{2^s(6n+1)!}{n!^6}\idotsint\limits_{\gamma\times[0,1]^s}
\frac{\prod_{i=0}^sx_i^{2n}(1-x_i^2)^n\,\d x_i}{(1-x_0x_1\dotsb x_s)^{6n+2}}
\end{align*}
in $\mathbb Q+\mathbb Q\zeta(5)+\dots+\mathbb Q\zeta(s)$ considered previously in \cite{Zu18}.
Here the path $\gamma\subset\mathbb R$ for integrating with respect to $x_0$ is given by
$\gamma=3[0,1]+4[0,-1]$, and the parity assumption on $n$ can be dropped.

\subsection*{Acknowledgements}
The note was produced during the trimester on \emph{Periods in Number Theory, Algebraic Geometry and Physics}
at the Hausdorff Research Institute for Mathematics (Bonn, Germany).
I thank Cl\'ement Dupont for his encouragement to write the integrals for the hypergeometric approximations used in \cite{FSZ18,Zu18}.


\end{document}